\documentclass[11pt]{amsart}

\usepackage{amsmath}
\usepackage{fullpage}
\usepackage{xspace}
\usepackage[psamsfonts]{amssymb}
\usepackage[latin1]{inputenc}
\usepackage{graphicx,color}
\usepackage[curve]{xypic} 
\usepackage{hyperref}
\usepackage{graphicx}
%%%%START

\usepackage{amsmath}%
\usepackage{amsthm}%
\usepackage{amscd}
\usepackage{amsfonts}%
\usepackage{amssymb}%
\usepackage{graphicx}

\usepackage{mathrsfs}

\usepackage{tikz}
\usetikzlibrary{matrix,arrows}

%------------------------------------------------------------
% Theorem like environments
%
\newtheorem{theorem}{Theorem}[section]

\newtheorem{conjecture}[theorem]{Conjecture}
\newtheorem{corollary}[theorem]{Corollary}

\newtheorem{lemma}[theorem]{Lemma}

\newtheorem{proposition}[theorem]{Proposition}

\theoremstyle{remark}

\numberwithin{equation}{section}

\newcommand{\hfrak}{\mathfrak{h}}

\newcommand{\Pcal}{\mathscr{P}}

\newcommand{\Z}{\mathbb{Z}}
\newcommand{\C}{\mathbb{C}}

\newcommand{\F}{\mathbb{F}}
\newcommand{\Q}{\mathbb{Q}}

\newcommand{\T}{\mathbb{T}}

\newcommand{\rk}{\mathrm{rank}\,}

  \DeclareFontFamily{U}{wncy}{}
    \DeclareFontShape{U}{wncy}{m}{n}{<->wncyr10}{}
    \DeclareSymbolFont{mcy}{U}{wncy}{m}{n}
    \DeclareMathSymbol{\Sha}{\mathord}{mcy}{"58}

%--------------------------------------------------------
\begin{document}
\title[]{A conjecture of Watkins for quadratic twists}

\author{Jose A. Esparza-Lozano}
\address{African Institute for Mathematical Sciences\newline 
\indent Rue KG590 ST,  Kigali, Rwanda}
\curraddr{Department of Mathematics\newline
\indent University of Michigan, Ann Arbor, USA}
\email[J. Esparza-Lozano]{josealanesparza@gmail.com}

\author{Hector Pasten}
\address{Pontificia Universidad Cat\'olica de Chile\newline
\indent Facultad de Matem\'aticas\newline
\indent 4860 Av. Vicu\~na Mackenna, Macul, RM, Chile}
\email[H. Pasten]{hector.pasten@mat.uc.cl}%

%\thanks{}
\thanks{J. E.-L. was supported by a Carroll L. Wilson Award and the MIT International Science and Technology Initiatives (MISTI). H. P. was supported by FONDECYT Regular grant 1190442.}
\date{\today}
\subjclass[2010]{Primary 11G05; Secondary 11F11, 11G18} %
\keywords{Elliptic curve, rank, modularity}%
%\dedicatory{}

\begin{abstract} Watkins conjectured that for an elliptic curve $E$ over $\Q$ of Mordell-Weil rank $r$, the modular degree of $E$ is divisible by $2^r$. If  $E$ has non-trivial rational $2$-torsion, we prove the conjecture for all the quadratic twists of $E$ by squarefree integers with sufficiently many prime factors. 
\end{abstract}

\maketitle

%%%%%%%%%%%%%%%%%%%%%%%%%%%%%%%%%%%%%%
%%%%%%%%%%%%%%%%%%%%%%%%%%%%%%%%%%%%%%
%%%%%%%%%%%%%%%%%%%%%%%%%%%%%%%%%%%%%%
%%%%%%%%%%%%%%%%%%%%%%%%%%%%%%%%%%%%%%
%%%%%%%%%%%%%%%%%%%%%%%%%%%%%%%%%%%%%%
%%%%%%%%%%%%%%%%%%%%%%%%%%%%%%%%%%%%%%

\section{Ranks and modular degree}

For an elliptic curve $E$ over $\Q$ of conductor $N$, the modularity theorem \cite{Wiles, TaylorWiles, BCDT} gives a non-constant morphism $\phi_E: X_0(N)\to E$ defined over $\Q$ where $X_0(N)$ is the modular curve associated to the congruence subgroup $\Gamma_0(N)\subseteq SL_2(\Z)$. We assume that $\phi_E$ has minimal degree and that it maps the cusp $i\infty$ to the neutral point of $E$. These requirements uniquely determine $\phi_E$ up to sign.  The \emph{modular degree} of $E$ is $m_E=\deg \phi_E$ and it has profound arithmetic relevance; for instance, polynomial bounds for its size in terms of $N$ are essentially equivalent to the $abc$ conjecture \cite{Frey, Murty}. 

The $2$-adic valuation is denoted by $v_2$.   Motivated by numerical data, Watkins \cite{Watkins} conjectured that $v_2(m_E)$ for an elliptic curve $E$ is closely related to the Mordell-Weil  rank of $E$ over $\Q$.
\begin{conjecture}[Watkins] For every elliptic curve $E$ over $\Q$ we have $\rk E(\Q)\le v_2(m_E)$.
\end{conjecture}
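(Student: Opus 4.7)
The conjecture is wide open in full generality, so I will propose a strategy aligned with the setting of this paper, as suggested by its title and abstract: $E/\Q$ has a non-trivial rational point of order $2$, and one studies its quadratic twists $E_d$ for $d$ squarefree with $\omega(d)$ large. The broad plan is to bound $\rk E_d(\Q)$ from above and $v_2(m_{E_d})$ from below, each by an affine function of $\omega(d)$, and to arrange for the second bound to dominate once $\omega(d)$ is sufficiently large in terms of $E$.

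For the rank side I would run classical $\varphi$-descent with $\varphi\colon E_d\to E_d'$ the $2$-isogeny induced by the rational $2$-torsion point. The standard descent exact sequence gives
\[
\rk E_d(\Q)\le \dim_{\F_2}\Sel^{\varphi}(E_d)+\dim_{\F_2}\Sel^{\widehat\varphi}(E_d')-2,
\]
and each Selmer group embeds into $\Q(S_d,2)$ with $S_d$ the union of $\{2,\infty\}$ and the primes of bad reduction of $E_d$. Since $E_d$ acquires additive reduction at exactly the primes $p\mid d$ with $p\nmid 2N$, a prime-by-prime analysis of the local Selmer conditions should yield a bound of the form $\rk E_d(\Q)\le a\cdot \omega(d)+C_E$ for an explicit small integer $a$.

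For the modular-degree side the aim is a matching lower bound of the form $v_2(m_{E_d})\ge b\cdot \omega(d)+C'_E$ with $b\ge a$. Natural tools are: (i)~the comparison between $m_{E_d}$ and the modular congruence number of the newform attached to $E_d$, together with the Atkin--Lehner action on $S_2(\Gamma_0(N_d))$; (ii)~results of Ribet, Agashe--Stein and their successors on level-raising congruences, which force extra factors of $2$ into $m_{E_d}$ at each new prime of additive reduction; and (iii)~explicit formulas for the behaviour of the modular degree under a quadratic twist by a prime $p\nmid 2N$. The rational $2$-torsion enters here too, producing a global mod-$2$ congruence between the newform of $E_d$ and that of $E_d'$, from which additional divisibility of $m_{E_d}$ by powers of $2$ can be extracted.

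The main obstacle is aligning the two linear estimates so that $b\ge a$; a naive count gives contributions of comparable size at each $p\mid d$, and the descent bound is essentially sharp precisely when $\Sha(E_d)[\varphi]$ is trivial. Beating it therefore requires a refined local analysis showing that each ``extra'' Selmer class not already matched by $v_2(m_{E_d})$ must come from $\Sha$ rather than from $E_d(\Q)/\varphi(E_d'(\Q))$. The hypothesis ``$\omega(d)$ sufficiently large'' is then exactly what is needed to absorb the bounded error terms $C_E$ and $C'_E$ coming from the fixed curve $E$ and from the primes dividing $2N$.
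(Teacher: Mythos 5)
First, a point of framing: the statement you were asked about is Watkins' conjecture itself, which the paper does not prove --- it only establishes the special case stated in Theorem \ref{ThmMain}. You correctly recognize this and pivot to a strategy for that special case, so I will assess your sketch against the paper's actual argument.

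Your two-sided plan (rank bounded above by an affine function of $\omega(d)$, $v_2(m_{E_d})$ bounded below by another, with the second slope dominating) is exactly the paper's architecture, and your rank-side bound via $\varphi$-descent through the rational $2$-isogeny is precisely Lemma \ref{LemmaSelmerBd}, giving slope $2$ per new prime. Where you diverge, and where your sketch has a genuine gap, is the lower bound on $v_2(m_{E_d})$. You propose congruence numbers, level-raising, Atkin--Lehner, and a mod-$2$ congruence of newforms; the paper instead uses the analytic factorization $m_E=4\pi^2c_E^2\|f_E\|_N^2\exp(2h(E))$ and controls each factor under twisting: the Faltings height changes by a rational number of bounded $2$-adic valuation (Lemma \ref{LemmaTwist}), the Manin constant is an integer (Edixhoven), and --- the decisive input --- Delaunay's formula for the ratio of Petersson norms contributes a factor $(p-1)(p+1-a_p(E))(p+1+a_p(E))$ for each prime $p\mid D$, $p\nmid 2N$. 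The rational $2$-torsion forces $a_p(E)\equiv p+1\bmod 2$, so all three factors are even and each new prime contributes at least $3$ to $v_2(m_{E_d})$. Thus $b=3>2=a$, and your stated ``main obstacle'' --- that the two slopes are comparable and one must show extra Selmer classes land in $\Sha$ --- does not arise; no refined local or $\Sha$ analysis is needed. As written, your sketch leaves the step that actually closes the argument (securing $b\ge a$) unresolved and misidentifies where the margin comes from, so it is a strategy outline with the key quantitative idea missing rather than a proof.
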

Dummigan \cite{Dummigan} showed that part of the conjecture would follow from strong $R=\T$ conjectures. Also, large part of Watkins' conjecture is proved for elliptic curves of odd modular degree \cite{CalegariEmerton, Yazdani, KazalickiKohen, KKerratum}, although it is not known whether there exist infinitely many elliptic curves of this kind \cite{SteinWatkins}. 

The goal of this note is to prove Watkins' conjecture unconditionally in several new cases. Let us introduce some notation. For an elliptic curve $E$ and a fundamental (quadratic) discriminant $D$, the quadratic twist of $E$ by $D$ is denoted by $E^{(D)}$. The Manin constant of $E$ is denoted by $c_E$ (cf. Section \ref{SecManin}). The number of distinct prime factors of an integer $n$ is $\omega(n)$.

\begin{theorem}\label{ThmMain} Let $E$ be an elliptic curve over $\Q$ of conductor $N$ with non-trivial rational $2$-torsion. Assume that $E$ has minimal conductor among its quadratic twists. If $D$ is a fundamental discriminant with $\omega(D)\ge 6+5\omega(N)-v_2(m_E/c_E^2)$, then Watkins' conjecture holds for $E^{(D)}$.
\end{theorem}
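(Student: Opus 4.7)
The plan is to bound $v_2(m_{E^{(D)}})$ from below and $\rk E^{(D)}(\Q)$ from above, and then use the hypothesis on $\omega(D)$ to show that the former dominates the latter. For the lower bound I would start from the classical identity relating modular degree, Manin constant, Petersson norm and periods,
\[
\frac{m_E}{c_E^2} \;=\; (\text{universal constant})\cdot\frac{\|f_E\|^2}{\Omega_E^+\cdot|\Omega_E^-|},
\]
together with the analogous identity for $E^{(D)}$. Since $f_{E^{(D)}}$ is essentially the quadratic twist $f_E\otimes\chi_D$ of $f_E$, the Petersson-norm ratio $\|f_{E^{(D)}}\|^2/\|f_E\|^2$ is governed by the index $[\Gamma_0(N):\Gamma_0(N_{E^{(D)}})]$, while the periods scale as $\Omega_{E^{(D)}}^{\pm}\approx\Omega_E^{\pm}/\sqrt{|D|}$ up to $2$-adic units coming from the passage to the minimal Weierstrass model of $E^{(D)}$. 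Each odd prime $p\mid D$ contributes a factor of the form $p(p+1)$ with $v_2(p(p+1))\ge 1$, giving a lower bound on $v_2(m_{E^{(D)}}/c_{E^{(D)}}^{2})-v_2(m_E/c_E^{2})$ that grows linearly in $\omega(D)$. The minimality-of-conductor hypothesis on $E$ is used to bound the cross-correction terms at primes $p\mid\gcd(D,N)$ linearly in $\omega(N)$.

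For the upper bound on $\rk E^{(D)}(\Q)$, I use that $E^{(D)}$ inherits rational $2$-torsion from $E$: if $T\in E(\Q)[2]$ and $\psi:E\to E^{(D)}$ is the twist isomorphism defined over $\Q(\sqrt{D})$, then for $\sigma\in\Gal(\bar\Q/\Q)$ one has $\sigma\psi(T)=\chi_D(\sigma)\psi(T)=\psi(T)$, since $\psi(T)$ is $2$-torsion. Hence $E^{(D)}$ admits a rational $2$-isogeny $\phi^{(D)}:E^{(D)}\to (E')^{(D)}$ obtained by twisting the rational $2$-isogeny $\phi:E\to E'$ attached to $T$. The standard $\phi^{(D)}$-descent then yields
\[
\rk E^{(D)}(\Q) \;\le\; \dim_{\F_2}\Sel^{(\phi^{(D)})}(E^{(D)}) + \dim_{\F_2}\Sel^{(\hat\phi^{(D)})}((E')^{(D)}) - 2,
\]
and the classical support bound for each Selmer group in terms of the bad primes of the twisted curve produces an upper bound for $\rk E^{(D)}(\Q)$ which is linear in $\omega(N_{E^{(D)}})\le\omega(N)+\omega(D)$.

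The hard part of the argument is the fine $2$-adic accounting, especially at primes $p\mid\gcd(D,N)$, where the twist changes the local reduction type and can alter both the Manin constant and the Selmer local conditions. Here the minimal-conductor hypothesis on $E$ is essential, as it lets one bound the relevant corrections uniformly in $\omega(N)$ with explicit constants. Combining the two estimates above, the hypothesis $\omega(D)\ge 6+5\omega(N)-v_2(m_E/c_E^{2})$ is precisely calibrated so that the lower bound on $v_2(m_{E^{(D)}})$ exceeds the upper bound on $\rk E^{(D)}(\Q)$, which gives Watkins' conjecture for $E^{(D)}$.
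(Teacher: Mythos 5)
Your overall architecture matches the paper's: a lower bound on $v_2(m_{E^{(D)}})$ from the period/Petersson-norm expression for the modular degree, an upper bound $\rk E^{(D)}(\Q)\le 2\omega(N^{(D)})-1\le 2\omega(D)+2\omega(N)-1$ from $2$-isogeny descent, and a comparison of the two. But the quantitative heart of the argument is wrong, and as stated the two bounds never meet. You claim each odd prime $p\mid D$ contributes a factor $p(p+1)$ with $v_2\ge 1$ to $m_{E^{(D)}}/m_E$; that is essentially the index $[\Gamma_0(N):\Gamma_0(Np^2)]$, which only accounts for the renormalization of the Petersson norm when the level changes, not for the change of the form itself under twisting. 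The correct local factor, coming from the Euler factors of $L(\mathrm{Sym}^2 f_E,s)$ at $s=2$ removed by the twist (Delaunay's formula), is $(p-1)(p+1-a_p(E))(p+1+a_p(E))$. The idea you are missing is that the rational $2$-torsion hypothesis forces $a_p(E)\equiv p+1\pmod 2$ for $p\nmid 2N$, so \emph{all three} factors are even and each prime in $\Pcal(D,N)$ contributes $v_2\ge 3$. This is essential: a gain of $1$ per prime gives a lower bound growing like $\omega(D)$, which can never dominate a rank bound growing like $2\omega(D)$ no matter how large $\omega(D)$ is; the gain of $3$ per prime is exactly what makes $3\omega(D)$ beat $2\omega(D)$ and produces the threshold $6+5\omega(N)-v_2(m_E/c_E^2)$.

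A secondary point: the minimal-conductor hypothesis is not needed for any delicate local correction at primes $p\mid\gcd(D,N)$; it is used only to guarantee $N\mid N^{(D)}$ so that the twisting formula for the Petersson norm applies, and the primes dividing $2N$ are simply dropped from the sum at a cost of $3(\omega(N)+1)$. Likewise the period ratio does not require the fine $2$-adic accounting you anticipate: one has $|v_2(\delta(E^{(D)},E))|\le 3$ from the explicit variation of the minimal discriminant under quadratic twist, and the Manin constants are handled by integrality of $c_{E^{(D)}}$ alone, which is why the final constant involves $v_2(m_E/c_E^2)$ rather than $v_2(m_E)$.
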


The quantity $6+5\omega(N)-v_2(m_E/c_E^2)$ is effectively computable and it can be read from existing tables of elliptic curves when $N$ is not too large, see for instance \cite{LMFDB}.

 For a positive integer $A$, it is a standard result of analytic number theory that the number of positive integers $n$ up to $x$ having $\omega(n)\le A$ is $O(x(\log \log x)^{A-1}/\log x)$. We deduce:

\begin{corollary} Let $E$ be an elliptic curve over $\Q$ with non-trivial rational $2$-torsion. There is an effective constant $\kappa(E)$ depending only on $E$ such that the number of fundamental discriminants $D$ with $|D|\le x$ such that Watkins' conjecture fails for $E^{(D)}$ is bounded by $O\left(x (\log\log x)^{\kappa(E)}/\log x\right)$.
\end{corollary}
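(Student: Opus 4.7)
The plan is to reduce the corollary to a single application of Theorem \ref{ThmMain}. First I would observe that quadratic twisting preserves the property of having nontrivial rational $2$-torsion: if $E:y^2=f(x)$ has rational $2$-torsion, then so does every twist $E^{(D)}:Dy^2=f(x)$, since the $2$-torsion is cut out by the same polynomial $f(x)=0$ in both cases. Thus, among the quadratic twists of $E$, pick a representative $E_0$ of minimal conductor $N_0$; this $E_0$ satisfies all hypotheses of Theorem \ref{ThmMain}. Write $E=E_0^{(D_0)}$ for a fixed fundamental discriminant $D_0$, and set
\[
A \ =\ 6+5\omega(N_0)-v_2(m_{E_0}/c_{E_0}^2),
\]
a quantity depending only on $E$ and effectively computable.

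The next step is to transfer the fundamental discriminants. For any fundamental discriminant $D$, the curve $E^{(D)}$ is $\Q$-isomorphic to $E_0^{(D^\star)}$, where $D^\star$ is the unique fundamental discriminant whose Kronecker character satisfies $\chi_{D^\star}=\chi_{D_0}\chi_D$. Comparing prime supports of these quadratic characters (the odd primes dividing $D^\star$ are precisely the symmetric difference of the odd primes dividing $D_0$ and $D$), and allowing a small adjustment for the prime $2$ in fundamental discriminants, one gets the elementary bound
\[
\omega(D^\star)\ \ge\ \omega(D)-\omega(D_0)-1.
\]
Consequently, whenever $\omega(D)\ge A+\omega(D_0)+1$, we have $\omega(D^\star)\ge A$, so Theorem \ref{ThmMain} applied to $E_0$ implies that Watkins' conjecture holds for $E_0^{(D^\star)}\cong E^{(D)}$.

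Failures of Watkins' conjecture for $E^{(D)}$ can therefore occur only for fundamental discriminants $D$ with $\omega(D)\le A+\omega(D_0)$. Setting $\kappa(E):=A+\omega(D_0)-1$ and invoking the Landau-type estimate recalled in the excerpt with $k=A+\omega(D_0)$ yields the desired asymptotic $O(x(\log\log x)^{\kappa(E)}/\log x)$ on the number of such $D$ with $|D|\le x$. The only point requiring care is the comparison $\omega(D^\star)\ge \omega(D)-\omega(D_0)-1$ in the second step; this is a slightly fiddly check because fundamental discriminants have awkward behavior at the prime $2$, but any imprecision costs only a bounded additive shift in $\kappa(E)$ and is harmless for the stated bound.
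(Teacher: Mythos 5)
Your argument is correct and is essentially the deduction the paper leaves implicit: reduce to the minimal-conductor twist $E_0$ (to which Theorem \ref{ThmMain} applies), note that composing twists shifts $\omega(D)$ by at most $\omega(D_0)+1$, and apply the Landau-type count of integers with boundedly many prime factors. You have simply written out the details that the paper compresses into ``we deduce,'' and the bookkeeping (including the choice $\kappa(E)=A+\omega(D_0)-1$) checks out.
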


Let us remark that in the cases where we prove Watkins' conjecture our argument actually shows that $v_2(m_{E^{(D)}})$ bounds the $2$-Selmer rank, which is a stronger version of Watkins' conjecture.

%%%%%%%%%%%%%%%%%%%%%%%%%%%%%%%%%%%%%%
%%%%%%%%%%%%%%%%%%%%%%%%%%%%%%%%%%%%%%
%%%%%%%%%%%%%%%%%%%%%%%%%%%%%%%%%%%%%%
%%%%%%%%%%%%%%%%%%%%%%%%%%%%%%%%%%%%%%
%%%%%%%%%%%%%%%%%%%%%%%%%%%%%%%%%%%%%%
%%%%%%%%%%%%%%%%%%%%%%%%%%%%%%%%%%%%%%

\section{Preliminaries}

\subsection{Faltings height} Let $E$ be an elliptic curve over $\Q$. We denote by $\omega_E$ a global Neron differential for $E$; it is unique up to sign. The Faltings height of $E$ (over $\Q$) is defined as certain Arakelov degree \cite{Faltings}, which in our case takes the simpler form \cite{SilvermanE}
\begin{equation}\label{EqnFH}
h(E)=-\frac{1}{2}\log \left(\frac{i}{2}\int_{E(\C)}\omega_E\wedge\overline{\omega_E}\right).
\end{equation}

Ramanujan's cusp form is $\Delta(z)=q\prod_{n=1}^\infty (1-q^n)^{24}$ where $q=\exp(2\pi i z)$, defined on the upper half plane $\hfrak=\{z\in \C : \Im(z)>0\}$. The modular $j$-function is normalized as $j(z)=q^{-1} + 744+...$

The global minimal discriminant of $E$ is denoted by $\Delta_E$.  If $\tau_E\in \hfrak$ satisfies that $j(\tau_E)$ is the $j$-invariant of $E$, then the Faltings height admits the expression \cite{Szpiro90, SilvermanE}
\begin{equation}\label{EqnhE}
h(E)=\frac{1}{12}\left(\log |\Delta_E| - \log \left| \Delta(\tau_E)\Im(\tau_E)^6\right|\right)-\log (2\pi).
\end{equation}
Given elliptic curves $E_1,E_2$ over $\Q$, let us define $\delta(E_1,E_2)=\exp(2h(E_1)-2h(E_2))$.
\begin{lemma}[Variation of $h(E)$ under quadratic twist] \label{LemmaTwist} Let $E_1$  be an elliptic curve over $\Q$ and let $E_2$ be a quadratic twist of $E_1$. Then $\delta(E_1,E_2)$ is a rational number and it satisfies $|v_2(\delta(E_1,E_2))|\le 3$.
\end{lemma}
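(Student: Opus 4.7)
The plan is to reduce $\delta(E_1,E_2)$ to a ratio of minimal discriminants via formula (\ref{EqnhE}), establish rationality by explicit quadratic-twist formulas, and bound the $2$-adic valuation via a local analysis at $p=2$. Since quadratic twists preserve the $j$-invariant, I would pick a single $\tau\in\hfrak$ with $j(\tau)=j(E_1)=j(E_2)$ and use it in (\ref{EqnhE}) for both curves. The archimedean factor $\log|\Delta(\tau)\Im(\tau)^{6}|$ and the constant $\log(2\pi)$ then cancel in $h(E_1)-h(E_2)$, leaving
$$\delta(E_1,E_2)=\exp\bigl(2h(E_1)-2h(E_2)\bigr)=\left|\frac{\Delta_{E_1}}{\Delta_{E_2}}\right|^{1/6}.$$
Thus the whole question reduces to the ratio of minimal discriminants of a pair of quadratic twists.

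For rationality, I would fix a global minimal Weierstrass model of $E_1$, write $E_2=E_1^{(d)}$ for a squarefree integer $d$, and form the naive twist, whose discriminant is $d^{6}\Delta_{E_1}$. The global minimal Weierstrass model of $E_2$ is obtained from the naive one by a Weierstrass change of variables $(x,y)\mapsto(u^{2}x+r,\,u^{3}y+u^{2}sx+t)$ with $u\in\Q^{\times}_{>0}$, scaling the discriminant by $u^{-12}$. Since both the naive twist and the minimal model of $E_2$ are integral Weierstrass models, a local check shows that $u\in\Z_{>0}$, so $\Delta_{E_2}=d^{6}\Delta_{E_1}/u^{12}$ and
$$\delta(E_1,E_2)=u^{2}/|d|\in\Q^{\times}.$$

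For the $2$-adic bound, $v_2(\delta)=2v_2(u)-v_2(d)$, or equivalently $6v_2(\delta)=v_2(\Delta_{E_1})-v_2(\Delta_{E_2})$. Since $u\in\Z_{>0}$ and $d$ is squarefree, $v_2(u)\ge 0$ and $v_2(d)\in\{0,1\}$, giving the trivial estimate $v_2(\delta)\ge -1$. For the upper bound $v_2(\delta)\le 3$ it suffices to prove $v_2(\Delta_{E_1})-v_2(\Delta_{E_2})\le 18$, and by the symmetry $\delta(E_2,E_1)=\delta(E_1,E_2)^{-1}$ this yields the full two-sided bound $|v_2(\delta)|\le 3$. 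The inequality $|v_2(\Delta_{E_1})-v_2(\Delta_{E_2})|\le 18$ is a purely local statement at $p=2$, which I would dispose of by running Tate's algorithm on representatives of the finite group $\Q_2^{\times}/(\Q_2^{\times})^{2}$ acting on the quadratic-twist family.

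The main obstacle is this last local estimate. At primes $p\ne 2$ the quadratic twist behaves transparently and the analogous bound is easy, but at $p=2$ several wildly ramified additive Kodaira types occur and a ramified quadratic twist can shift the Kodaira type substantially, so a finite case analysis seems unavoidable. The bound $18$ is sharp and corresponds to the maximal fluctuation of $v_2(\Delta^{\min})$ within a single quadratic-twist family of elliptic curves over $\Q_2$.
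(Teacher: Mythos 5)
Your reduction is exactly the paper's: choosing a common $\tau$ in \eqref{EqnhE} for the two twists so that the archimedean terms cancel and $\delta(E_1,E_2)=|\Delta_{E_1}/\Delta_{E_2}|^{1/6}$, after which everything rests on the ratio of minimal discriminants. The rationality argument via an integral twisted model and an integral unminimalization factor $u$ is sound (modulo a small bookkeeping point: the standard integral twisted model $y^2=x^3-27c_4d^2x-54c_6d^3$ has discriminant $6^{12}d^6\Delta_{E_1}$, not $d^6\Delta_{E_1}$, so the identity $\Delta_{E_2}=d^6\Delta_{E_1}/u^{12}$ with $u\in\Z$ requires either $d\equiv 1\bmod 4$ or a rescaled model that is only integral away from $2$ and $3$; this affects the constants in your intermediate inequalities but not the strategy).

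The genuine gap is that the entire quantitative content of the lemma --- the bound $|v_2(\Delta_{E_1})-v_2(\Delta_{E_2})|\le 18$, equivalently $v_2(u)\le 2$ --- is deferred to a case analysis you do not carry out, and you acknowledge as much. Note also that the analysis is not literally finite as described: the twisting classes in $\Q_2^\times/(\Q_2^\times)^2$ form a finite group, but you must additionally run over all reduction types of $E_1$ at $2$, including the infinite families $I_n$ and $I_n^*$ (these can be handled uniformly via Ogg's formula or the $j$-invariant, but that argument still has to be written). Your closing assertion that $18$ is sharp is likewise unsubstantiated and is not needed for the lemma. The paper disposes of precisely this step by citing Proposition 2.4 of Pal's \emph{Periods of quadratic twists of elliptic curves}, which records the explicit formulas for the variation of the minimal discriminant under quadratic twist that you would be rederiving; either import that result or complete the local computation at $p=2$ before the proof can be considered finished.
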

\begin{proof} We use \eqref{EqnhE} for both $E_1$ and $E_2$.  The elliptic curves are isomorphic over $\C$, so we can take $\tau_{E_1}=\tau_{E_2}$ which gives $\delta(E_1,E_2)=|\Delta_{E_1}/\Delta_{E_2}|^{1/6}$. The result follows from explicit formulas for the variation of the minimal discriminant under quadratic twists, cf. Proposition 2.4 in \cite{VivekPal}.
\end{proof}

\subsection{Petersson norm} For a positive integer $N$, let $S_2(N)$ be the space of weight $2$ cuspidal holomorphic modular forms for the congruence subgroup $\Gamma_0(N)$ acting on $\hfrak$. Given $f\in S_2(N)$, its Fourier expansion is $f(z)=a_1(f)q+a_2(f)q^2+...$ where $q=\exp(2\pi i z)$ and the numbers $a_n(f)$ are the Fourier coefficients of $f$.  The Petersson norm of $f$ relative to $\Gamma_0(N)$ is defined by
$$
\|f\|_N = \left(\int _{\Gamma_0(N)\backslash \hfrak} |f(z)|^2dx\wedge dy\right)^{1/2}, \quad z=x+iy\in \hfrak.
$$
The norm depends on the choice of $N$ in the following sense:  If $N|M$ and $f\in S_2(N)$, then we certainly have $f\in S_2(M)$, and $\|f\|_M^2=[\Gamma_0(N):\Gamma_0(M)]\cdot \|f\|_N^2$.

We need some additional notation. For an elliptic curve $E$ over $\Q$ of conductor $N$ we denote by $f_E\in S_2(N)$ the Hecke newform attached to $E$ by the modularity theorem, normalized by $a_1(f_E)=1$. The modular form $f_E$ is characterized by the following property: If $p$ is a prime of good reduction for $E$ and we define $a_p(E)=p+1-\# E(\F_p)$, then $a_p(f_E)=a_p(E)$. For a fundamental discriminant $D$, let $\Pcal(D,N)$ be the set of primes $p$ with $p|D$ and $p\nmid 2N$. 
\begin{lemma}[Variation of the Petersson norm under quadratic twist] \label{LemmaNorm} Let $E$ be an elliptic curve over $\Q$ and let $D$ be a fundamental discriminant. Let $N$ and $N^{(D)}$ be the conductors of $E$ and $E^{(D)}$ respectively, and assume that $N|N^{(D)}$. Then $\|f_{E^{(D)}}\|_{N^{(D)}}^2/\|f_E\|_N^2\in \Q^\times$ and we have
$$
v_2(\|f_{E^{(D)}}\|_{N^{(D)}}^2/\|f_E\|_N^2)+1 \ge \sum_{p\in \Pcal(D,N)} v_2\left((p-1)(p+1-a_p(E))(p+1+a_p(E))\right).
$$
\end{lemma}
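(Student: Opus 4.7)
The plan is to relate the two Petersson norms through their symmetric square $L$-functions, exploiting the fact that the symmetric square is insensitive to quadratic twists. Via Shimura's integral representation, for a weight-$2$ newform $g$ of level $M$ with trivial nebentypus one has a formula of the shape
$$
\|g\|_M^2 = A\cdot M \cdot L(2,\mathrm{Sym}^2 g)\cdot R_M(g),
$$
where $A$ is a transcendental constant independent of $g$ and $M$, and $R_M(g)\in \Q^\times$ collects local correction factors supported at primes of bad reduction. Taking the ratio for $g = f_E$ and $g = f_{E^{(D)}}$, the constant $A$ cancels, proving the ratio belongs to $\Q^\times$.

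Since $\chi_D^2$ is trivial, the Galois representations $\mathrm{Sym}^2 f_E$ and $\mathrm{Sym}^2 f_{E^{(D)}}$ are isomorphic at all primes $p\nmid D$, so their local $L$-factors agree there. Hence
$$
\frac{\|f_{E^{(D)}}\|_{N^{(D)}}^2}{\|f_E\|_N^2} = \frac{N^{(D)}}{N}\cdot \prod_{p\mid D}\frac{L_p(2, \mathrm{Sym}^2 f_{E^{(D)}})}{L_p(2, \mathrm{Sym}^2 f_E)}\cdot \frac{R_{N^{(D)}}(f_{E^{(D)}})}{R_N(f_E)}.
$$

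The core computation is at each prime $p \in \Pcal(D,N)$, where $E$ has good reduction and Frobenius eigenvalues $\alpha,\beta$ satisfy $\alpha+\beta = a_p(E)$ and $\alpha\beta = p$. A direct expansion gives
$$
L_p(2, \mathrm{Sym}^2 f_E)^{-1} = \left(1-\tfrac{\alpha^2}{p^2}\right)\left(1-\tfrac{1}{p}\right)\left(1-\tfrac{\beta^2}{p^2}\right) = \frac{(p-1)(p+1-a_p(E))(p+1+a_p(E))}{p^3}.
$$
On the other hand, $E^{(D)}$ has additive potentially-good reduction at $p$ with $a_p(f_{E^{(D)}})=0$, so $L_p(2, \mathrm{Sym}^2 f_{E^{(D)}})$ is a rational number of vanishing $2$-adic valuation. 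Combined with the factor $p^2$ coming from $N^{(D)}/N$ at this odd prime, the contribution at $p$ to the norm ratio has $2$-adic valuation exactly $v_2((p-1)(p+1-a_p(E))(p+1+a_p(E)))$.

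The main obstacle is controlling the combined contribution from primes in $\{2\}\cup\{p:p\mid N\}$: the correction factors $R_p$, the local $L$-factors there, and the local level ratio all intervene. One must show that these together lower the $2$-adic valuation of the ratio by at most $1$, precisely the slack absorbed by the ``$+1$'' on the left-hand side. This bound follows from an explicit case analysis of local Weil-Deligne representations at primes of bad reduction for $E$ or $E^{(D)}$, classified by the reduction type and the local behavior of $\chi_D$; the analysis is elementary but requires checking several sub-cases, most delicately at $p=2$ when $4\mid D$ or $8\mid D$.
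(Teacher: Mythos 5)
Your overall route --- expressing each Petersson norm via the symmetric square $L$-value at $s=2$ and tracking the local factors that change under twisting --- is exactly the conceptual origin of the formula (as the paper itself remarks after the lemma), and your computation at a prime $p\in\Pcal(D,N)$ is correct: the Euler factor removed by twisting is $(p-1)(p+1-a_p(E))(p+1+a_p(E))/p^{3}$, and together with the factor $p^{2}$ from the conductor ratio this contributes the claimed $2$-adic valuation since $p$ is odd. The rationality of the ratio also follows as you say, since the transcendental period cancels.

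However, the argument is incomplete at precisely the point where the lemma has content: the claim that the combined contribution of $p=2$ and of the primes dividing $N$ (including primes dividing $\gcd(D,N)$, and the delicate cases $4\mid D$ or $8\mid D$) lowers $v_2$ of the ratio by at most $1$. This is the sole source of the ``$+1$'' on the left-hand side, and you only assert that it ``follows from an explicit case analysis'' without carrying out any of the sub-cases. As written, nothing in your proposal rules out the correction factors $R_{N^{(D)}}(f_{E^{(D)}})/R_N(f_E)$, the local $L$-factors at bad primes, and the $2$-part of $N^{(D)}/N$ conspiring to cost more than one factor of $2$; indeed you have not pinned down a definition of $R_M(g)$ precise enough for such a bound to be extracted. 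The paper avoids this entirely by citing the exact twisting formula of Delaunay (Theorem 1 of \cite{Delaunay}), in which every local contribution is written out explicitly; one then simply discards the manifestly nonnegative terms and keeps the contribution of $p=2$ (which accounts for the $+1$) together with that of the primes in $\Pcal(D,N)$. To make your argument self-contained you would need either to reproduce that local analysis in full or to invoke such a reference.
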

\begin{proof} The quadratic Dirichlet character attached to $D$ has conductor $|D|$. The result follows from the precise formula given in Theorem 1 of \cite{Delaunay} when one only keeps the contribution of $p=2$ and  the  primes $p\in \Pcal(D,N)$ ---the product of the latter primes is denoted by $D_1$ in \emph{loc. cit.}
\end{proof}
We remark that the terms $(p-1)(p+1-a_p(E))(p+1+a_p(E))$ have a clear conceptual origin; they come from Euler factors of the imprimitive symmetric square $L$-function $L(\mathrm{Sym}^2 f_E,s)$ that are removed by twisting, and $L(\mathrm{Sym}^2 f_E,2)$ is (up to a mild factor) equal to $\|f_E\|^2_N$. See \cite{Zagier, Delaunay, Watkins}.

\subsection{Manin constant}\label{SecManin} Given an elliptic curve $E$ over $\Q$ of conductor $N$, we have that $\phi_E^*\omega_E$ is a regular differential on $X_0(N)=\Gamma_0(N)\backslash \hfrak\cup\{\mbox{cusps}\}$. More precisely
\begin{equation}\label{Eqnpullback}
\phi_E^*\omega_E=2\pi i c_Ef_E(z)dz
\end{equation}
where $c_E$ is a rational number uniquely defined up to sign. We assume that the signs of $\phi_E$ and $\omega_E$ are chosen such that $c_E>0$. It follows from \eqref{EqnFH} and  \eqref{Eqnpullback} that (cf. \cite{Szpiro90, SilvermanE})
\begin{equation}\label{EqnModDeg} m_E=4\pi^2 c_E^2\|f_E\|_N^2\exp(2h(E)).
\end{equation}

 The quantity $c_E$ is called the Manin constant, and a fundamental fact is
\begin{lemma} [cf. \cite{Edixhoven}] \label{LemmaEdix}The Manin constant $c_E$ is an integer.
\end{lemma}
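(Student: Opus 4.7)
The plan is to compare the $q$-expansions at the cusp $i\infty$ of the two differentials identified by equation \eqref{Eqnpullback}, and to deduce integrality of $c_E$ from integrality of the $q$-expansion of $f_E$ together with regularity of $\phi_E^*\omega_E$ on a suitable integral model.

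First I would observe that $f_E$ has integer Fourier coefficients. For each prime $p$ of good reduction we have $a_p(f_E)=p+1-\#E(\F_p)\in \Z$; at bad primes $a_p(f_E)\in\{0,\pm 1\}$; and by the Hecke multiplicativity relations, all $a_n(f_E)$ then lie in $\Z$. Writing $q=\exp(2\pi i z)$, this gives
$$
2\pi i\, f_E(z)\, dz \; = \; \sum_{n\ge 1}a_n(f_E)\, q^{n-1}\, dq,
$$
so this differential has $q$-expansion with integral coefficients and leading coefficient $a_1(f_E)=1$.

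Next I would show that $\phi_E^*\omega_E$ also has an integral $q$-expansion at $i\infty$. Let $\mathcal{E}/\Z$ be the N\'eron model of $E$; by construction $\omega_E$ is a $\Z$-basis of the rank-one module $H^0(\mathcal{E},\Omega^1_{\mathcal{E}/\Z})$. Choosing a regular model $\mathcal{X}$ of $X_0(N)$ over $\Z$, the N\'eron mapping property lets one extend $\phi_E$ to a morphism from the smooth locus of $\mathcal{X}$ to $\mathcal{E}$. The cusp $i\infty$ lies in the smooth locus, and by the Tate curve description of the formal neighborhood of this cusp, $q$ is an integral uniformizer there; thus $\phi_E^*\omega_E$ is regular near $i\infty$ on the integral model and its $q$-expansion has coefficients in $\Z$. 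Comparing with $\phi_E^*\omega_E = c_E\cdot 2\pi i\, f_E(z)\, dz$, the leading coefficient of the right-hand side is $c_E\cdot a_1(f_E)=c_E$, and we conclude $c_E\in\Z$.

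The main obstacle is justifying integrality of the extension of $\phi_E$ and of the pullback differential at primes $p\mid N$, where $X_0(N)$ has bad reduction and $\mathcal{X}$ is not smooth over $\Z_p$. This is handled in Edixhoven's treatment by a careful analysis of the special fibers of $\mathcal{X}$ and of the component groups of the Jacobian $J_0(N)$, refining Mazur's original argument which covered the primes $p\nmid 2N$; the most delicate points are those of additive reduction, where input from the theory of finite flat group schemes is needed.
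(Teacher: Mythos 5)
The paper does not actually prove this lemma: it is imported from the literature with the citation \cite{Edixhoven}, so there is no in-paper argument to compare against. Your sketch is a correct outline of the standard proof found in that reference (going back to Mazur \cite{Mazur} for the easier primes): integrality of the Fourier coefficients of $f_E$, extension of $\phi_E$ to the smooth locus of an integral model of $X_0(N)$ via the N\'eron mapping property, regularity of $\phi_E^*\omega_E$ there, and the $q$-expansion principle at the cusp $\infty$ (whose completed local ring on the Deligne--Rapoport/Katz--Mazur model is $\Z[[q]]$), which forces the leading coefficient $c_E\,a_1(f_E)=c_E$ of the algebraic $q$-expansion of \eqref{Eqnpullback} to lie in $\Z$. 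All of the genuine technical content --- the existence of integral models whose smooth locus contains the cusp $\infty$ with formal parameter $q$ defined over $\Z$, and the identification of the algebraic with the analytic $q$-expansion --- is deferred to the reference, but since the paper treats the entire lemma as a black box this is strictly more justification than the source text supplies. Two small remarks. First, the component-group and finite-flat-group-scheme analysis in \cite{Edixhoven} is really needed for the finer statement bounding which primes can divide $c_E$ (toward Manin's conjecture $c_E=1$); bare integrality already follows from the argument you outline once the integral models are in place, so your attribution slightly overstates what is needed. Second, it is worth noting that your argument nowhere uses optimality of $E$ in its isogeny class, only that $\phi_E$ is a morphism defined over $\Q$; this matters here because the paper applies the lemma to $E^{(D)}$ with $\phi_{E^{(D)}}$ only assumed of minimal degree, not assumed to be a strong Weil parametrization.
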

We recall that Manin \cite{Manin} conjectured that if $E$ is a strong Weil curve in the sense that $m_E$ is minimal within the isogeny class of $E$, then $c_E=1$. See \cite{Mazur, AU, ARS, Cesnavicius} and the references therein.

%%
%%

%%%%%%%%%%%%%%%%%%%%%%%%%%%%%%%%%%%%%%
%%%%%%%%%%%%%%%%%%%%%%%%%%%%%%%%%%%%%%
%%%%%%%%%%%%%%%%%%%%%%%%%%%%%%%%%%%%%%
%%%%%%%%%%%%%%%%%%%%%%%%%%%%%%%%%%%%%%
%%%%%%%%%%%%%%%%%%%%%%%%%%%%%%%%%%%%%%
%%%%%%%%%%%%%%%%%%%%%%%%%%%%%%%%%%%%%%

\section{Consequences for Watkins' conjecture}

\begin{lemma}\label{LemmaLowerBd} Let $E$ be an elliptic curve over $\Q$ of conductor $N$ and suppose that $E$ has minimal conductor among its quadratic twists. Let $D$ be a fundamental discriminant.  Then
$$
v_2(m_{E^{(D)}}) \ge v_2(m_E/c_E^2)-4+  \sum_{p\in \Pcal(D,N)} v_2\left((p-1)(p+1-a_p(E))(p+1+a_p(E))\right).
$$
\end{lemma}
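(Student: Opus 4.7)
The plan is to combine the three preliminary results (Lemma \ref{LemmaEdix}, Lemma \ref{LemmaTwist}, Lemma \ref{LemmaNorm}) via the identity \eqref{EqnModDeg}, which expresses $m_E$ as a product of the Manin constant squared, the Petersson norm squared, and an exponential of the Faltings height. The strategy is essentially bookkeeping on $v_2$ of the ratio $m_{E^{(D)}}/m_E$.

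\textbf{Step 1: Ratio formula.} Applying \eqref{EqnModDeg} to both $E$ and $E^{(D)}$ and dividing, I obtain
\[
\frac{m_{E^{(D)}}/c_{E^{(D)}}^2}{m_E/c_E^2}=\frac{\|f_{E^{(D)}}\|_{N^{(D)}}^2}{\|f_E\|_N^2}\cdot \delta(E^{(D)},E),
\]
where I recognize the factor $\exp(2h(E^{(D)})-2h(E))$ as $\delta(E^{(D)},E)$. Both sides are positive rationals (by Lemmas \ref{LemmaTwist} and \ref{LemmaNorm}), so taking $v_2$ gives
\[
v_2(m_{E^{(D)}})=v_2(m_E/c_E^2)+2v_2(c_{E^{(D)}})+v_2\!\left(\|f_{E^{(D)}}\|_{N^{(D)}}^2/\|f_E\|_N^2\right)+v_2(\delta(E^{(D)},E)).
\]

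\textbf{Step 2: Bounding each term.} I treat the three variable terms separately. The Manin constant contribution is non-negative because $c_{E^{(D)}}\in\Z$ by Lemma \ref{LemmaEdix}, hence $2v_2(c_{E^{(D)}})\ge 0$. The Faltings height contribution is bounded by $v_2(\delta(E^{(D)},E))\ge -3$ from Lemma \ref{LemmaTwist}. Finally, the Petersson norm contribution is bounded below by Lemma \ref{LemmaNorm} as
\[
v_2\!\left(\|f_{E^{(D)}}\|_{N^{(D)}}^2/\|f_E\|_N^2\right)\ge -1+\sum_{p\in \Pcal(D,N)} v_2\!\left((p-1)(p+1-a_p(E))(p+1+a_p(E))\right).
\]
Summing the three lower bounds gives $-4$ plus the sum over $\Pcal(D,N)$, yielding the claimed inequality.

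\textbf{Step 3: The divisibility hypothesis.} The one place I must be careful is that Lemma \ref{LemmaNorm} requires $N\mid N^{(D)}$. This is precisely where the assumption that $E$ has minimal conductor among its quadratic twists enters: any quadratic twist $E^{(D)}$ differs from $E$ only at primes dividing $2D$, and the standard formulas for conductors of twists (together with minimality of $N$) force $N\mid N^{(D)}$, allowing $f_E$ to be regarded as an element of $S_2(N^{(D)})$ so that the Petersson norms can be compared.

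The argument is essentially an assembly of the three preliminary lemmas, so there is no serious obstacle; the only subtlety is making sure the signs/directions of the inequalities in Lemmas \ref{LemmaTwist} and \ref{LemmaNorm} line up correctly (both produce \emph{lower} bounds after accounting for the absolute value in Lemma \ref{LemmaTwist} and the $+1$ in Lemma \ref{LemmaNorm}), and that the minimal conductor hypothesis is invoked to legitimize the use of Lemma \ref{LemmaNorm}.
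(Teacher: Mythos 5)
Your proof is correct and follows exactly the paper's route: apply \eqref{EqnModDeg} to $E$ and $E^{(D)}$, take $v_2$ of the ratio, and bound the three factors by Lemmas \ref{LemmaEdix}, \ref{LemmaTwist}, and \ref{LemmaNorm} respectively (the paper's proof is just a terser version of your Steps 1--2). Your Step 3 correctly identifies where the minimal-conductor hypothesis is used, a point the paper leaves implicit.
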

\begin{proof} Applying \eqref{EqnModDeg} to $E$ and $E^{(D)}$ we find
$$
\frac{m_{E^{(D)}}}{m_E}   = \frac{c_{E^{(D)}}^2}{c_E^2}\cdot \frac{\|f_{E^{(D)}}\|_{N^{(D)}}^2}{\|f_E\|_N^2}\cdot  \delta(E^{(D)},E).
$$
The result follows from lemmas  \ref{LemmaTwist},  \ref{LemmaNorm}, and \ref{LemmaEdix}.
\end{proof}
\begin{proposition}\label{PropLowerBd} Let $E$ be an elliptic curve over $\Q$ of conductor $N$ with non-trivial rational $2$-torsion and suppose that $E$ has minimal conductor among its quadratic twists. Let $D$ be a fundamental discriminant. We have $v_2(m_{E^{(D)}})\ge 3\omega(D) +v_2(m_E/c_E^2)- (7+3\omega(N))$.
\end{proposition}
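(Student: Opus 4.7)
The plan is to combine Lemma \ref{LemmaLowerBd} with the arithmetic consequences of having a rational $2$-torsion point. The core observation is that for a prime $p$ of good reduction for $E$ with $p\ne 2$, the reduction map injects $E[2](\Q)$ into $E(\F_p)$, so the presence of a rational point of order $2$ forces $\#E(\F_p)=p+1-a_p(E)$ to be even. Since $p$ is odd, $p+1$ is even, so $a_p(E)$ is even as well. Consequently each of the three integers $p-1$, $p+1-a_p(E)$, and $p+1+a_p(E)$ is even, and therefore
\[
v_2\bigl((p-1)(p+1-a_p(E))(p+1+a_p(E))\bigr)\ge 3
\]
for every $p\in \Pcal(D,N)$ (recall that by definition $p\nmid 2N$, so $p$ is odd and of good reduction for $E$).

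Next I would bound the size of $\Pcal(D,N)$ from below. By definition $\Pcal(D,N)$ consists of the primes dividing $D$ that do not divide $2N$, so we lose at most the primes in $2N$, giving
\[
|\Pcal(D,N)|\ge \omega(D)-\omega(2N)\ge \omega(D)-\omega(N)-1.
\]
Combining this with the pointwise lower bound above yields
\[
\sum_{p\in \Pcal(D,N)} v_2\bigl((p-1)(p+1-a_p(E))(p+1+a_p(E))\bigr)\ge 3\omega(D)-3\omega(N)-3.
\]

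Finally I would plug this estimate into Lemma \ref{LemmaLowerBd}, which gives
\[
v_2(m_{E^{(D)}})\ge v_2(m_E/c_E^2)-4+3\omega(D)-3\omega(N)-3 = 3\omega(D)+v_2(m_E/c_E^2)-(7+3\omega(N)),
\]
exactly as required.

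There is no serious obstacle here; the proposition is essentially a formal consequence of Lemma \ref{LemmaLowerBd} once one extracts the parity information encoded in the rational $2$-torsion hypothesis. The only mild subtlety is being careful that the primes in $\Pcal(D,N)$ are automatically odd primes of good reduction, which is built into the definition, so the inference $a_p(E)\equiv 0\pmod 2$ applies uniformly to every term of the sum.
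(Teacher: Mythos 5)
Your proof is correct and follows the same route as the paper: reduce mod $p$ to see that $a_p(E)$ is even for $p\in\Pcal(D,N)$, deduce that each term in the sum of Lemma \ref{LemmaLowerBd} is at least $3$, and bound $\#\Pcal(D,N)\ge\omega(D)-\omega(N)-1$. No issues.
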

\begin{proof}  As $E(\Q)[2]$ is non-trivial and it maps injectively into $E(\F_p)$ for every prime $p\nmid 2N$, we have $p+1\equiv a_p(E)\bmod 2$ for these primes. We get $v_2(m_{E^{(D)}})\ge v_2(m_E/c_E^2)-4+3\cdot \#\Pcal(D,N)$ from  Lemma \ref{LemmaLowerBd}, and the result follows from $\#\Pcal(D,N)\ge \omega(D)-\omega(2N)\ge \omega(D)-\omega(N)-1$.
\end{proof}
The following upper bound for the Mordell-Weil rank is standard and it comes from a bound for a $2$-isogeny Selmer rank (cf. Section X.4 in \cite{SilvermanAEC}; see also \cite{ALP}).
\begin{lemma}\label{LemmaSelmerBd} Let $E$ be an elliptic curve over $\Q$ of conductor $N$  with non-trivial rational $2$-torsion. Then $\rk E(\Q)\le 2\omega(N)-1$.
\end{lemma}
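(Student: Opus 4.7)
The plan is to carry out the classical $2$-isogeny descent. I would begin by letting $T\in E(\Q)$ be the non-trivial rational 2-torsion point and writing $\phi:E\to E'$ for the 2-isogeny with $\ker\phi=\{O,T\}$, with dual $\hat\phi:E'\to E$. Since $\phi$ is defined over $\Q$, the curves $E$ and $E'$ are $\Q$-isogenous and share the conductor $N$. I would choose an integral Weierstrass model $E:y^2=x(x^2+ax+b)$ with $T=(0,0)$ (so $E':y^2=x(x^2-2ax+(a^2-4b))$ and $\Delta_E=16b^2(a^2-4b)$) and form the classical Kummer maps
$$\alpha\colon E(\Q)/\hat\phi E'(\Q)\hookrightarrow\Q^\times/(\Q^\times)^2,\qquad \alpha'\colon E'(\Q)/\phi E(\Q)\hookrightarrow\Q^\times/(\Q^\times)^2.$$

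Next I would argue, by the standard height descent on the coordinates of rational points on $E$ and $E'$, that $\mathrm{Im}(\alpha)$ lies in the $\F_2$-subspace of $\Q^\times/(\Q^\times)^2$ generated by $\{-1\}$ and the prime divisors of $b$, and similarly that $\mathrm{Im}(\alpha')$ lies in the subspace generated by $\{-1\}$ and the prime divisors of $a^2-4b$. Combining this with the descent product formula in Section X.4 of \cite{SilvermanAEC}, which relates $\rk E(\Q)$ to the $\F_2$-dimensions of these images with a correction of $-2$ coming from the rational 2-torsion of $E$ and $E'$, would give the clean intermediate estimate $\rk E(\Q)\le \omega(b)+\omega(a^2-4b)$.

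Finally, since every prime dividing $b\cdot(a^2-4b)$ also divides $\Delta_E$ and is therefore a prime of bad reduction of $E$, each such prime divides $N$; this yields the coarse bound $\omega(b)+\omega(a^2-4b)\le 2\omega(N)$. The sharper constant $-1$ in the statement then comes from a refined local analysis at $\{2,\infty\}$: the Selmer-local conditions there force a further drop in dimension by one, either by identifying a common prime of additive reduction that is counted in both $\omega(b)$ and $\omega(a^2-4b)$, or by showing that certain square classes in $\langle-1,p:p\mid N\rangle$ cannot appear in the images. This last local bookkeeping is the main obstacle, though it is standard and is worked out in Section X.4 of \cite{SilvermanAEC} and in \cite{ALP}; the rest of the argument is a direct application of the descent machinery.
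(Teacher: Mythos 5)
The paper does not actually prove this lemma: it cites it as a standard consequence of the $2$-isogeny descent of Section X.4 of \cite{SilvermanAEC} and of \cite{ALP}. Your proposal reconstructs exactly that descent, and it is correct up to the intermediate estimate $\rk E(\Q)\le \omega(b)+\omega(a^2-4b)$ (the images of the two Kummer maps have dimensions at most $1+\omega(b)$ and $1+\omega(a^2-4b)$, and the product formula subtracts $2$). However, the last two steps, which are the only nontrivial ones, both have genuine gaps. First, the inequality $\omega(b)+\omega(a^2-4b)\le 2\omega(N)$ does not follow from ``every prime dividing $b(a^2-4b)$ divides $\Delta_E$ and is therefore a prime of bad reduction'': a prime dividing the discriminant of a chosen Weierstrass model need not divide the minimal discriminant, so you must fix a model that is minimal (at least away from $2$); and even then the prime $2$ is a real exception, since completing the square and translating the $2$-torsion point to the origin typically forces $2\mid b$ and $2\mid a^2-4b$ even when $E$ has good reduction at $2$ (curves with rational $2$-torsion and odd conductor exist, e.g.\ conductor $15$). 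As written your accounting only yields $2\omega(2N)\le 2\omega(N)+2$, and some extra input at the prime $2$ is needed to recover $2\omega(N)$.

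Second, the crucial $-1$ is not proved; you explicitly defer it, and the mechanisms you suggest are not the right ones. There is no reason for a ``common prime of additive reduction'' to exist, and in any case a common prime would not produce a uniform drop of one. The standard source of the $-1$ is the archimedean place: examining the signs of $b$ and $a^2-4b$ (whose product of sign data is constrained because the discriminant of $X^2-2aX+(a^2-4b)$ is $16b$), one checks in every case that at least one of the two cubics $x(x^2+ax+b)$ and $X(X^2-2aX+(a^2-4b))$ is nonnegative only for nonnegative abscissa, so the corresponding Kummer image consists of totally positive square classes and the generator $-1$ can be dropped from one of the two counts, giving $\bigl(\omega(b)+1\bigr)+\omega(a^2-4b)-2$ or the symmetric bound. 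Since the $-1$ and the treatment of the prime $2$ are precisely the points where the constant in the lemma is won, the proposal as it stands establishes only the weaker bound $\rk E(\Q)\le 2\omega(N)+2$ and should not be regarded as a proof of the stated inequality.
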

\begin{proof}[Proof of Theorem \ref{ThmMain}]   Since $E^{(D)}[2]\simeq E[2]$ as Galois modules and $E$ has non-trivial rational $2$-torsion, we can use Lemma \ref{LemmaSelmerBd} for $E^{(D)}$, which gives
$$
\rk E^{(D)}(\Q)\le 2\omega(N^{(D)})-1\le 2(\omega(D)+\omega(N))-1.
$$
If Watkins' conjecture fails for $E^{(D)}$, then Proposition \ref{PropLowerBd} would give
$$
2(\omega(D)+\omega(N)) - 1\ge v_2(m_{E^{(D)}})+1\ge 3\omega(D)+v_2(m_E/c_E^2)-6-3\omega(N).
$$
This is not possible when $\omega(D)\ge 6 +5\omega(N)-v_2(m_E/c_E^2)$.
\end{proof}

%%%%%%%%%%%%%%%%%%%%%%%%%%%%%%%%%%%%%%
%%%%%%%%%%%%%%%%%%%%%%%%%%%%%%%%%%%%%%
%%%%%%%%%%%%%%%%%%%%%%%%%%%%%%%%%%%%%%
%%%%%%%%%%%%%%%%%%%%%%%%%%%%%%%%%%%%%%
%%%%%%%%%%%%%%%%%%%%%%%%%%%%%%%%%%%%%%
%%%%%%%%%%%%%%%%%%%%%%%%%%%%%%%%%%%%%%

\section{Acknowledgments}

The first author was supported by a Carroll L. Wilson Award and the MIT International Science and Technology Initiatives (MISTI) during an academic visit to Pontificia Universidad Cat\'olica de Chile. The second author was supported by FONDECYT Regular grant 1190442.

%
%
%

%%%%%%%%%%%%%%%%%%%%%%%%%%%%%%%%%%%%%%
%%%%%%%%%%%%%%%%%%%%%%%%%%%%%%%%%%%%%%
%%%%%%%%%%%%%%%%%%%%%%%%%%%%%%%%%%%%%%

\end{document}